\newcommand{\ga}{\gamma}
\newcommand{\la}{\lambda}
\newcommand{\iy}{\infty}
\theoremstyle{plain}
\newtheorem{thm}{Theorem}
\newtheorem{lem}{Lemma}
\theoremstyle{definition}
\theoremstyle{remark}
\begin{document}

\begin{center}
{\large\bf An inverse problem for the integro-differential Dirac system with partial information given on the convolution kernel
}
\\[0.2cm]
{\bf Natalia P. Bondarenko} \\[0.2cm]
\end{center}

\vspace{0.5cm}

{\bf Abstract.} An integro-differential Dirac system with an integral term in the form of convolution is considered.
We suppose that the convolution kernel is known a priori on a part of the interval, and recover it on the remaining part,
using a part of the spectrum. We prove the uniqueness theorem, provide an algorithm for the solution of the inverse problem
together with necessary and sufficient conditions for its solvability. 

\medskip

{\bf Keywords:} integro-differential Dirac system, inverse spectral problem, half-inverse problem, nonlocal operator.

\medskip

{\bf AMS Mathematics Subject Classification (2010):} 34A55 34B05 34B09 34B10 34L40 47E05   

\vspace{1cm}

{\large \bf 1. Introduction}

\bigskip

In this paper, we study an inverse spectral problem for the integro-differential Dirac system
\begin{equation} \label{eqv}
  B y' + \int_0^x M(x - t) y(t) \, dt = \la y, \quad 0<x<\pi,
\end{equation}
where $\la$ is the spectral parameter,
$$
    B = \begin{pmatrix} 0 & 1 \\ -1 & 0 \end{pmatrix}, \quad
    M(x) = \begin{pmatrix} p(x) & q(x) \\ -q(x) & p(x) \end{pmatrix}, \quad
    y(x) = \begin{pmatrix} y_1(x) \\ y_2(x) \end{pmatrix},
$$
the functions $p$, $q$ are complex-valued and belong to the class $L_{2, \pi}(0, \pi)$, 
$$
L_{2, \pi}(c, \pi) := \{ r \colon (\pi - x) r(x) \in L_2(c, \pi) \}, \quad c \ge 0.
$$

Inverse spectral problems consist in recovering differential operators from their spectral characteristics.
Classical results on inverse spectral problems for Sturm-Liouville and Dirac differential operators can be found in
the monographs \cite{Mar77, Lev84, LS91, PT87, FY01}. In recent years, inverse problems for integro-differential operators
attract much attention of mathematicians. Such operators are nonlocal and therefore more difficult for investigation, than differential operators. 
First results on inverse problems for integro-differential operators were obtained in the works \cite{Mal79, Er88, Yur91}.
In the paper \cite{But06}, a new method was proposed, based on the reduction of an inverse problem for a nonlocal operator to
a nonlinear integral equation, containing series of convolutions. Later on, this method has been successfully applied
for solution of inverse problems for Sturm-Liouville operators with an integral delay (see \cite{But07, But10, BR15}) and for the 
Dirac system in the form \eqref{eqv} (see \cite{BB17}).
 
We describe the results of the paper \cite{BB17} in more detail. 
Let $D = D(p, q)$ be the boundary value problem for the system \eqref{eqv} with the boundary conditions
\begin{equation} \label{bc}
   y_1(0) = y_1(\pi) = 0.
\end{equation}
The following inverse spectral problem has been investigated in \cite{BB17}.

\medskip

{\bf Inverse Problem 1.} {\it Given the spectrum of $D$, find $p$ and $q$.}

\medskip

The main results of \cite{BB17} can be formulated as follows.

\begin{thm} \label{thm:BB17}
\begin{enumerate}
The problem $D$ has a countable set of eigenvalues, which can be numbered as $\{ \la_k \}_{k \in \mathbb Z}$ counting 
with their multiplicities, so that
\begin{equation} \label{asymptla}
\la_k = k + \varkappa_k, \quad \{ \varkappa_k \} \in l_2.
\end{equation}

For any sequence of complex numbers $\{ \la_k \}_{k \in \mathbb Z}$ of the form \eqref{asymptla}, there exist unique functions $p, q \in L_{2, \pi}(0, \pi)$
such that $\{ \la_k \}_{k \in \mathbb Z}$ is the spectrum of the boundary value problem $D(p, q)$ of the form \eqref{eqv}-\eqref{bc}.
\end{enumerate}
\end{thm}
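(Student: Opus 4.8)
The plan is to establish the two assertions separately, following the convolution-series method developed in \cite{But06} and applied to \eqref{eqv} in \cite{BB17}. The first assertion is a direct (forward) spectral statement --- existence of the eigenvalue sequence and its asymptotics --- while the second is the inverse statement combining uniqueness and existence of the potential.

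\textbf{Direct problem and asymptotics.} First I would construct the solution $y(x,\la)$ of \eqref{eqv} under the initial conditions $y_1(0,\la) = 0$, $y_2(0,\la) = 1$ by successive approximations. Since the perturbation is a convolution, each iterate is an iterated convolution of the kernel $M$ with itself, so $y(x,\la)$ is given by a series of convolutions that converges uniformly on compacta and is entire in $\la$ for fixed $x$. Writing $\Delta_0(\la) := -\sin\pi\la$ for the characteristic function of the unperturbed problem, the full characteristic function $\Delta(\la) := y_1(\pi,\la)$ then admits a representation
\begin{equation*}
\Delta(\la) = \Delta_0(\la) + \int_0^\pi \big( w_1(t)\cos\la t + w_2(t)\sin\la t \big)\, dt,
\end{equation*}
where $w_1, w_2 \in L_2(0,\pi)$ are determined by $p$ and $q$. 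The function $\Delta$ is entire of exponential type $\pi$, and at $\la = k$ the integral term reduces to Fourier coefficients of $L_2$ functions, so $\{\Delta(k)\}_{k\in\mathbb Z} \in l_2$. Applying Rouch\'e's theorem on small contours around each integer $k$ --- where $|\Delta_0'(k)| = \pi \ne 0$ --- localizes exactly one eigenvalue $\la_k$ near $k$ for large $|k|$; its leading correction is $-\Delta(k)/\Delta_0'(k)$, and since $|\Delta_0'(k)| = \pi$ is bounded below while $\{\Delta(k)\} \in l_2$, this gives $\la_k = k + \varkappa_k$ with $\{\varkappa_k\} \in l_2$. A global counting argument, with multiplicities, accounts for the finitely many small-index coincidences.

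\textbf{Inverse problem.} For uniqueness I would use that $\Delta$ is entire of exponential type $\pi$ with the prescribed leading term $\Delta_0$, hence is uniquely reconstructed from its zero set $\{\la_k\}$ (with multiplicities) as the canonical product normalized by its asymptotics; the product converges precisely because of \eqref{asymptla}. Two potentials with the same spectrum therefore share $\Delta$, and hence the transform data $(w_1, w_2)$, so uniqueness reduces to injectivity of the map $(p,q) \mapsto (w_1, w_2)$. For existence, given any $\{\la_k\}$ of the form \eqref{asymptla}, I would form $\Delta$ as the corresponding product, recover $(w_1, w_2) \in L_2(0,\pi)$ by inverting the linear Fourier-type relation above (using completeness of the relevant trigonometric system), and finally reconstruct $(p,q)$ from $(w_1, w_2)$. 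The dependence of $(w_1, w_2)$ on $(p,q)$ is nonlinear: it consists of a linear principal part plus a series of higher-order convolutions of $p, q$. Hence $(p,q)$ is characterized as the solution of a nonlinear integral ``main equation,'' which I would solve by a fixed-point scheme, exploiting that the convolution corrections are of higher order and contractive in the weighted space $L_{2,\pi}$.

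\textbf{Main obstacle.} I expect the crux to be the solvability and unique invertibility of the nonlinear main equation in exactly the class $L_{2,\pi}(0,\pi)$: one must show both that the iteration converges to a fixed point and that the resulting $p, q$ carry precisely the admissible mild singularity at $x = \pi$ encoded by the weight $(\pi - x)$. Securing the contraction in this weighted space simultaneously delivers the injectivity needed for uniqueness. By contrast, the direct-problem asymptotics are a comparatively routine consequence of Rouch\'e's theorem together with Bessel's/Parseval's inequality for the trigonometric transform.
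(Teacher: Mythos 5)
The paper does not actually prove Theorem~\ref{thm:BB17} itself --- it is quoted from \cite{BB17} --- but your outline (the representation \eqref{Delta} of the characteristic function with an $L_2$ Fourier-type remainder, Rouch\'e's theorem giving the $l_2$-asymptotics of the zeros, and reduction of the inverse problem to the nonlinear convolution-series main equations \eqref{intK}, solved constructively by iteration using the Volterra-type structure and the factorial decay of the $n$-fold convolution terms) is precisely the method of \cite{But06,BB17} that Section~2 of this paper summarizes. Your proposal is consistent with that approach, so there is nothing substantive to correct.
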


The proof of Theorem~\ref{thm:BB17} is constructive, and gives an algorithm for the solution of Inverse Problem~1.

In this paper, we suppose that the kernel of the integral term $M(x)$ is known a priori on the subinterval $(0, a)$, $\frac{\pi}{2} \le a < \pi$.
We show that in this case, only a fractional part of the spectrum is sufficient to reconstruct the matrix $M(x)$ on the subinterval $(a, \pi)$.
Such problem statement is similar in some sense to the Hochstadt-Lieberman problem \cite{HL78}, which consists in recovering
the potential of the Sturm-Liouville operator on a finite interval from a spectrum, while the potential on the half of the interval is known a priori.
The Hochstadt-Lieberman problem, also called the half-inverse problem, and its generalizations for various classes of differential operators and pencils
were studied in \cite{GS00, Sakh01, HM04, SY08, MP10, SBI11, But11, BS12, Yang10, Yang11, YZ12, WX09, WX12, Bond17-1, Bond17-2}.
However, we know the only paper \cite{BS16} on a half-inverse problem for an integro-differential operator, where the Sturm-Liouville operator
with an integral delay in the form of convolution is recovered from a half of the spectrum and 
the convolution kernel is given a priori on the half of the interval. We also mention the paper \cite{Bond18}, where an inverse problem
for the integro-differential Sturm-Liouville operator on a graph was studied under the assumption, that the operator coefficients are known
on a part of the graph.

The paper is organized as follows. In Section~2, we provide the system of the main equations for Inverse Problem~1, derived in \cite{BB17},
and show the benefits of the knowledge of the functions $p$ and $q$ on a subinterval. In Section~3, the partial inverse problem is formulated,
which consists in recovering the functions $p$ and $q$ on the remaining part of the interval from an abstract subset of the spectrum $\{ \la_k \}_{k \in \mathcal I}$,
$\mathcal I \subset \mathbb Z$.
We prove the uniqueness theorem in terms of completeness of some system of vector functions, associated with the given subspectrum.
We also provide a constructive algorithm for the solution of the partial inverse problem. In Section~4, we consider the case $a = \pi - \frac{\pi}{m}$,
$m \in \mathbb N$, $m \ge 2$,
$\mathcal I = \{ s m \colon s \in \mathbb Z \}$. In this case the partial inverse problem is uniquely solvable. Using the results of Section~3, we obtain
the necessary and sufficient conditions for the solvability of the inverse problem in this special case.  

\bigskip

{\large \bf 2. Main equations}

\bigskip

In this section, we modify the system of the main equations from the paper \cite{BB17}, assuming that the functions $p$ and $q$ are known on 
the subinterval $(0, a)$.

Let us introduce the following notations for the convolutions:
$$
   (f * g)(x) = \int_0^x f(t) g(x - t) \, dt, \quad f^{*n}(x) = \underbrace{f * f * \dots * f}_{n}(x), \quad f^{*0}(x) = \delta(x),
$$
where $\delta$ is the Dirac delta function: $\delta * f = f * \delta = f$. 

In the paper \cite{BB17}, Inverse Problem~1 was reduced to the system of nonlinear integral equations with respect 
to the functions $p$ and $q$:

\begin{equation} \label{intK}
\arraycolsep=1.4pt\def\arraystretch{2.2}
\left\{\begin{array}{l} \displaystyle -w_1(\pi-t) = (\pi - t) q(t) - \sum_{n = 2}^{\iy} \sum_{j = 0}^n
\dfrac{a_{nj} (\pi - t)^n }{n!} \left( p^{*j} * q^{*(n - j)} \right) (t),
\\
\displaystyle  -w_2(\pi-t) = (\pi - t) p(t) + \sum_{n = 2}^{\iy} \sum_{j = 0}^n \dfrac{b_{nj} (\pi -
t)^n }{n!} \left( p^{*j} * q^{*(n - j)} \right) (t).
\end{array}\right.
\end{equation}
Here $a_{nj}$ and $b_{nj}$ are certain coefficients, satisfying the estimates
$$
	\sum_{j = 0}^n |a_{nj}| \le 2^{n-1}, \quad \sum_{j = 0}^n |b_{nj}| \le 2^{n-1}, \quad n \in \mathbb N,
$$
and the functions $w_1$, $w_2$ can be found from the following representation of the characteristic function $\Delta(\la)$,
whose zeros coincide with the eigenvalues of $D$:
\begin{equation} \label{Delta}
\Delta(\la) = \sin \la \pi + \int_0^{\pi}\Big( w_{1}(t) \sin \la t + w_{2}(t) \cos \la t\Big) \, dt, \quad w_1, w_2 \in L_2(0,\pi).
\end{equation}

Suppose that the matrix-function $M(x)$ is known on the interval $(0, a)$. Introduce the functions
\begin{gather} \label{defp1}
   p_1(x) = \left\{ \begin{array}{ll} p(x), & x \in (0, a), \\ 0, & x \in (a, \pi), \end{array} \right. \quad
   q_1(x) = \left\{ \begin{array}{ll} q(x), & x \in (0, a), \\ 0, & x \in (a, \pi), \end{array} \right. \\ \nonumber
   p_2(x) = \left\{ \begin{array}{ll} 0, & x \in (0, a), \\ p(x), & x \in (a, \pi), \end{array} \right. \quad
   q_2(x) = \left\{ \begin{array}{ll} 0, & x \in (0, a), \\ q(x), & x \in (a, \pi). \end{array} \right. 
\end{gather}
Thus, $p(x) = p_1(x) + p_2(x)$, $q(x) = q_1(x) + q_2(x)$. One can easily show that
$$
   (p_1 + p_2)^{*j} * (q_1 + q_2)^{* (n - j)} = p_1^{*j} * q_1^{*(n - j)} + j p_1^{*(j - 1)} * q_1^{*(n - j)} * p_2 +
   (n - j) p_1^{*j} * q_1^{*(n - j - 1)} * q_2,
$$
so the system \eqref{intK} takes the form
\begin{equation} \label{intK2}
\arraycolsep=1.4pt\def\arraystretch{2.2}
\left\{\begin{array}{l} \displaystyle -w_1(\pi-t) = (\pi - t) q_1(t) + (\pi - t) q_2(t) +
A_1(t) + \sum_{n = 2}^{\iy} \frac{(\pi - t)^n}{n!} (A_{n1} * p_2 + A_{n2} * q_2)(t),
\\
\displaystyle  -w_2(\pi-t) = (\pi - t) p_1(t) + (\pi - t) p_2(t) + B_1(t) + 
\sum_{n = 2}^{\iy} \frac{(\pi - t)^n}{n!} (B_{n1} * p_2 + B_{n2}* q_2)(t), \end{array}\right.
\end{equation}
where
\begin{align} \label{defA1}
   A_1(t) & = -\sum_{n = 2}^{\iy} \sum_{j = 0}^n \frac{a_{nj} (\pi - t)^n}{n!} \left( p_1^{*j} * q_1^{*(n - j)} \right) (t), \\ \label{defA2}
   A_{n1}(t) & = -\sum_{j = 1}^n j a_{nj} \left(p_1^{*(j-1)}*q_1^{*(n-j)} \right)(t), \quad 
   A_{n2}(t) = -\sum_{j = 0}^{n-1} (n-j) a_{nj} \left(p_1^{*j}*q_1^{*(n-j-1)}\right)(t),
   \\ \label{defB1}
   B_1(t) & = \sum_{n = 2}^{\iy} \sum_{j = 0}^n \dfrac{b_{nj} (\pi - t)^n }{n!} \left( p_1^{*j} * q_1^{*(n - j)} \right) (t), \\ \label{defB2}
   B_{n1}(t) & = \sum_{j = 1}^n j b_{nj} \left( p_1^{*(j-1)}*q_1^{*(n-j)} \right)(t), \quad 
   B_{n2}(t) = \sum_{j = 0}^{n-1} (n - j) b_{nj} \left( p_1^{*j}*q_1^{*(n-j-1)} \right)(t).
\end{align}
For arbitrary functions $f, g \in L_2(0, \pi)$ their convolution $f * g$ also belong to $L_2(0, \pi)$, and
$$
    \| f * g \| \le \pi \| f \| \| g \|, \quad \| f^{*n} \| \le \pi^{n-1} \| f \|^n, \quad n \in \mathbb N,
$$
where $\| . \|$ is the norm in $L_2(0, \pi)$.
Consequently, the series $A_1$ and $B_1$ converge in $L_2(0,\pi)$, and the functions $A_{nk}$, $B_{nk}$ satisfy the estimate
\begin{equation} \label{estAB}
   \| A_{nk} \|, \| B_{nk} \| \le n \pi^{n-2} 2^{n-1} \max \{ \| p_1 \|, \| q_1 \| \}^{n-1}, \quad n \ge 2, \: k = 1, 2.
\end{equation}
For $t \in (0, a)$, the system \eqref{intK2} takes the form
\begin{equation} \label{intK3}
\left\{\begin{array}{l} -w_1(\pi-t) = (\pi - t) q_1(t) + A_1(t),\\
						 -w_2(\pi-t) = (\pi - t) p_1(t) + B_1(t), \end{array}\right.
\end{equation}
Thus, known the functions $p, q \in L_2(0, a)$, one can find $w_1, w_2 \in L_2(b, \pi)$, $b := \pi - a$. 

For $t \in (a, \pi)$, we have the system \eqref{intK2} with respect to the functions $p_2$ and $q_2$.

\begin{lem} \label{lem:uniq}
For arbitrary functions $w_1, w_2 \in L_2(0, b)$, $A_1, B_1 \in L_2(a, \pi)$ and $A_{nk}, B_{nk} \in L_2(0, \pi)$, $n \ge 2$, $k = 1, 2$, satisfying the estimate~\eqref{estAB},
the system \eqref{intK2} has a unique solution $p_2,q_2 \in L_{2,\pi}(a, \pi)$, i.e. $(\pi - t) p_2(t), \, (\pi - t) q_2(t) \in L_2(a, \pi)$.
\end{lem}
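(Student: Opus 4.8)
The plan is to read the restriction of \eqref{intK2} to $t \in (a,\pi)$ as a linear operator equation of the second kind on $L_2(a,\pi)\times L_2(a,\pi)$, and to show that this operator is the identity plus a Volterra Hilbert--Schmidt perturbation, which is automatically invertible. Since $p_1 = q_1 = 0$ on $(a,\pi)$, the system reads
\begin{align*}
  (\pi - t)\, q_2(t) &= g_1(t) - \sum_{n=2}^{\iy} \frac{(\pi-t)^n}{n!}\bigl(A_{n1}*p_2 + A_{n2}*q_2\bigr)(t), \\
  (\pi - t)\, p_2(t) &= g_2(t) - \sum_{n=2}^{\iy} \frac{(\pi-t)^n}{n!}\bigl(B_{n1}*p_2 + B_{n2}*q_2\bigr)(t),
\end{align*}
where $g_1(t) = -w_1(\pi - t) - A_1(t)$ and $g_2(t) = -w_2(\pi - t) - B_1(t)$ belong to $L_2(a,\pi)$ by the hypotheses ($w_1,w_2\in L_2(0,b)$ gives $w_k(\pi-\cdot)\in L_2(a,\pi)$, and $A_1,B_1\in L_2(a,\pi)$ by assumption). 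The natural unknowns are $u := (\pi - t)q_2$ and $v := (\pi - t)p_2$: by the definition of $L_{2,\pi}(a,\pi)$, seeking $p_2,q_2$ in this class is \emph{equivalent} to seeking $u,v\in L_2(a,\pi)$, with $p_2 = v/(\pi-t)$, $q_2 = u/(\pi-t)$, and this correspondence is a bijection, so uniqueness and existence transfer.

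After substituting and using that, for $p_2$ supported on $(a,\pi)$, the convolution is causal, $(A_{n1}*p_2)(t)=\int_a^t A_{n1}(t-s)\,\frac{v(s)}{\pi-s}\,ds$, the system becomes $(I+\mathcal R)\mathbf z = \mathbf g$ with $\mathbf z = (u,v)^{\top}$, $\mathbf g = (g_1,g_2)^{\top}$, and $\mathcal R$ the $2\times2$ matrix of integral operators given by the two series. The elementary observation that tames the singular weight $1/(\pi-s)$ is that $\frac{\pi-t}{\pi-s}\le 1$ for $a<s<t<\pi$: absorbing one factor $\pi-t$ into $1/(\pi-s)$ leaves a bounded quotient, so that
$$
  \Bigl|\frac{(\pi-t)^n}{n!}\bigl(A_{n1}*p_2\bigr)(t)\Bigr|
  \le \frac{(\pi-t)^{n-1}}{n!}\int_a^t |A_{n1}(t-s)|\,|v(s)|\,ds ,
$$
and the effective kernels are genuinely square-integrable in $(t,s)$.

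Next I would prove that $\mathcal R$ is bounded and Hilbert--Schmidt. Estimating the $L_2(a,\pi)$-norm of the $n$-th summand with the convolution bound $\|f*g\|\le\pi\|f\|\|g\|$, the estimate \eqref{estAB}, and $(\pi-t)\le b$ on $(a,\pi)$, one obtains a term bound of the form
$$
  \frac{b^{\,n-1}\,n\,\pi^{\,n-1}\,2^{\,n-1}\,C^{\,n-1}}{n!}
  = \frac{(2\pi b C)^{\,n-1}}{(n-1)!},\qquad C := \max\{\|p_1\|,\|q_1\|\},
$$
whose sum over $n\ge2$ is $e^{2\pi bC}-1<\iy$; hence the operator series converges in norm and $\|\mathcal R\|<\iy$. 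The very same computation applied to the $L_2((a,\pi)^2)$-norm of the kernels (with one extra harmless factor $\sqrt b/\pi$) shows that the kernel series converges in $L_2((a,\pi)^2)$, so $\mathcal R$ is a Hilbert--Schmidt integral operator with no diagonal part.

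Finally, each summand is a Volterra (causal) operator, because $(A_{nk}*p_2)(t)$ depends only on $p_2|_{(a,t)}$; causality survives summation and $L_2$-kernel limits, so $\mathcal R$ is a Hilbert--Schmidt Volterra operator and therefore quasinilpotent, $\sigma(\mathcal R)=\{0\}$. Consequently $I+\mathcal R$ is boundedly invertible, its Neumann series $\sum_{k\ge0}(-\mathcal R)^k$ converges, and $\mathbf z=(I+\mathcal R)^{-1}\mathbf g$ is the unique solution in $L_2(a,\pi)^2$; undoing the substitution yields the unique $p_2,q_2\in L_{2,\pi}(a,\pi)$. The main obstacle is the middle step: simultaneously controlling the singular factor $1/(\pi-s)$ and summing the series of convolution operators. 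The resolution is the inequality $\frac{\pi-t}{\pi-s}\le1$, which converts the would-be singularity into a bounded quotient and lets the factorials from the convolution estimates secure both the Hilbert--Schmidt property and quasinilpotence. If one prefers to avoid quoting the quasinilpotence of Hilbert--Schmidt Volterra operators, one may instead bound the iterated kernels of $\mathcal R^k$ directly, gaining a factor $1/(k-1)!$ from the repeated causal integrations, which again forces the Neumann series to converge.
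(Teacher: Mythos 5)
Your proposal is correct and follows essentially the same route as the paper: the same substitution $u=(\pi-t)q_2$, $v=(\pi-t)p_2$, the same observation that the factor $\frac{\pi-t}{\pi-s}$ is bounded on the triangle $a<s<t<\pi$, and the same reduction to a second-kind Volterra equation with an $L_2$ matrix kernel on $(a,\pi)\times(a,\pi)$. The only difference is that you spell out the convergence of the kernel series and the quasinilpotence/Neumann-series argument, which the paper leaves to the standard theory of Volterra equations.
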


\begin{proof}
Denote
\begin{gather*}
   y(t) = (\pi - t) \begin{pmatrix} q_2(t) \\ p_2(t) \end{pmatrix}, \quad 
   \mu(t) = \begin{pmatrix} -w_1(\pi - t) - A_1(t) \\ -w_2(\pi - t) - B_1(t) \end{pmatrix}, \\
   H(t, s) = \frac{\pi-t}{\pi-s} \sum_{n = 2}^{\iy} \frac{(\pi-t)^{n-1}}{n!} 
   \begin{pmatrix} A_{n1}(t-s) & A_{n2}(t-s) \\ B_{n1}(t-s) & B_{n2}(t-s) \end{pmatrix}.
\end{gather*}
Then the system \eqref{intK2} for $t \in (a, \pi)$ can be rewritten in the form
\begin{equation} \label{Volt}
   y(t) + \int_a^t H(t, s) y(s) \, ds = \mu(t).
\end{equation}
According to the conditions of the theorem, $\mu \in L_2(a, \pi) \oplus L_2(a,\pi)$ and the entries of the matrix function
$H(t, s)$ belong to $L_2\left( (a, \pi)\times (a, \pi) \right)$. Thus, the Volterra integral equation \eqref{Volt} of the second kind has a unique solution
$y \in L_2(a, \pi) \oplus L_2(a,\pi)$.
\end{proof}

\bigskip

{\large \bf 3. Abstract partial inverse problem}

\bigskip

Let $\mathcal I$ be a subset of $\mathbb Z$. In this section, we study the following partial inverse problem.

\medskip

{\bf Inverse Problem 2.} {\it Given the functions $p, q$ on the interval $(0, a)$ and the subset of the spectrum $\{ \la_k \}_{k \in \mathcal I}$,
construct $p$ and $q$ on the interval $(a, \pi)$.}

\medskip

Of course, the uniqueness of the solution and the possibility of the constructive solution of Inverse Problem~2 depend on $a$ and a subset $\mathcal I$.
Note that the subset $\{ \la_k \}_{k \in \mathcal I} $ can contain multiple eigenvalues. Denote by $\{ \la_k \}_{k \in \mathcal J}$ the subset of all the
distinct eigenvalues among $\{ \la_k \}_{k \in \mathcal I}$, and let $\mathcal M := \{ m_k \}_{k \in \mathcal J}$ be the corresponding multiplicities,
$m_k \ge 1$. 

The analysis in the previous section shows, that we need to know $w_1$ and $w_2$ on the interval $(0, b)$, in order to find $p_2$ and $q_2$.
Introduce the function
\begin{equation} \label{defE}
   E(\la) := \Delta(\la) - \sin \la \pi - \int_b^{\pi} \left( w_1(t) \sin \la t + w_2(t) \cos \la t\right) \, dt.
\end{equation}
Relation \eqref{Delta} implies
\begin{equation} \label{relE}
   E(\la) = \int_0^b \left( w_1(t) \sin \la t + w_2(t) \cos \la t \right) \, dt.
\end{equation}
It follows from \eqref{defE} and \eqref{relE}, that
\begin{align} \label{defEk}
   \frac{d^j}{d \la^j} E(\la_k) & = -\left( \frac{d^j}{d \la^j} \sin \la \pi\right)_{\la = \la_k} -\left(\frac{d^j}{d \la^j} \int_b^{\pi} (w_1(t) \sin \la t + w_2(t) \cos \la t) \, dt \right)_{\la = \la_k}, 
    \\ \label{relEk}
    \frac{d^j}{d \la^j} E(\la_k) & = \int_0^b \left(w_1(t) \bigg( \frac{d^j}{d \la^j} \sin \la t\bigg)_{\la = \la_k} + w_2(t) \bigg(\frac{d^j}{d \la^j} \cos \la t\bigg)_{\la = \la_k} \right) \, dt
\end{align}
for $k \in \mathcal J, \: j = \overline{0, m_k-1}$,  
since $\dfrac{d^j}{d \la^j} \Delta(\la_k) = 0$ for such $k$.

Consider the complex Hilbert space $\mathcal H := L_2(0, b) \oplus L_2(0, b)$ with the scalar product 
$$
   (g, h) = \int_0^b (\overline{g_1(t)} h_1(t) + \overline{g_2(t)} h_2(t) )\, dt, \quad g, h \in \mathcal H, \quad
   g = \begin{pmatrix} g_1 \\ g_2 \end{pmatrix},
   h = \begin{pmatrix} h_1 \\ h_2 \end{pmatrix}.
$$
In view of \eqref{relEk}, the functions $w_1$ and $w_2$ on $(0, b)$ are uniquely specified by the values of $\dfrac{d^j}{d \la^j}E(\la_k)$, $k \in \mathcal J$,
$j = \overline{0, m_k-1}$, if the system of vector functions 
\begin{equation} \label{SI}
    S_{\mathcal I} := \left\{ \begin{pmatrix} \dfrac{d^j}{d \la^j} \sin \la t \\[10pt] \dfrac{d^j}{d \la^j} \cos \la t\end{pmatrix}_{\la = \la_k} \right\}_{k \in \mathcal J, \, j = \overline{0, m_k-1}}
\end{equation}
is complete in $\mathcal H$. The values $\dfrac{d^j}{d \la^j}E(\la_k)$ can be determined by \eqref{defEk}.

Along with $D$, we consider another boundary value problem $\tilde D = D(\tilde p, \tilde q)$ of the same form, but with
different coefficients. We agree that if a certain symbol $\ga$ denotes an object related to $D$, the corresponding symbol
$\tilde \ga$ denotes an analogous object related to $\tilde D$. Now we are ready to formulate a uniqueness theorem for Inverse Problem~2.

\begin{thm} \label{thm:uniq}
Let $p(x) = \tilde p(x)$ and $q(x) = \tilde q(x)$ a.e. on $(0, a)$, and let the eigenvalues
$\{ \la_k \}_{k \in \mathbb Z}$ and $\{ \tilde \la_k \}_{k \in \mathbb Z}$ be numbered in such a way, that the asymptotic formula
\eqref{asymptla} holds for the both sequences, and $\la_k = \tilde \la_k$ for $k \in \mathcal I$, where the subset $\mathcal I$ is such that 
the system $\mathcal S_{\mathcal I}$ is complete in $\mathcal H$. Then $p(x) = \tilde p(x)$ and $q(x) = \tilde q(x)$ a.e. on $(a, \pi)$.
\end{thm}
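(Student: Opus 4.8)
The plan is to convert all the available information into the single conclusion that $w_1=\tilde w_1$ and $w_2=\tilde w_2$ a.e.\ on the whole interval $(0,\pi)$, and then to read off $p=\tilde p$, $q=\tilde q$ on $(a,\pi)$ from the unique solvability of the Volterra equation in Lemma~\ref{lem:uniq}. First I would use the agreement of the coefficients on $(0,a)$: since $p=\tilde p$ and $q=\tilde q$ a.e.\ on $(0,a)$, the functions in \eqref{defp1} satisfy $p_1=\tilde p_1$, $q_1=\tilde q_1$, and hence the auxiliary functions $A_1,B_1,A_{nk},B_{nk}$ from \eqref{defA1}--\eqref{defB2}, being built solely from $p_1$ and $q_1$, coincide with their tilded counterparts. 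Evaluating \eqref{intK3} for $t\in(0,a)$, so that $\pi-t$ runs over $(b,\pi)$, then gives $w_1=\tilde w_1$ and $w_2=\tilde w_2$ a.e.\ on $(b,\pi)$.

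Next I would transfer the spectral data to the interval $(0,b)$ via the function $E$. The right-hand side of \eqref{defEk} is assembled only from $\sin\la\pi$, the eigenvalue $\la_k$, and the values of $w_1,w_2$ on $(b,\pi)$, all of which are now common to $D$ and $\tilde D$; here one also uses $\la_k=\tilde\la_k$ for $k\in\mathcal I$ together with the vanishing of the derivatives of both $\Delta$ and $\tilde\Delta$ up to order $m_k-1$ at the common eigenvalue, which is what licenses writing \eqref{defEk} for the tilded problem as well. It follows that $\frac{d^j}{d\la^j}E(\la_k)=\frac{d^j}{d\la^j}\tilde E(\tilde\la_k)$ for $k\in\mathcal J$, $j=\overline{0,m_k-1}$. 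Substituting the alternative representation \eqref{relEk} and subtracting the two identities (recalling $\tilde\la_k=\la_k$), I get
$$\int_0^b \Bigl( (w_1 - \tilde w_1)(t)\Bigl(\frac{d^j}{d\la^j}\sin\la t\Bigr)_{\la = \la_k} + (w_2 - \tilde w_2)(t)\Bigl(\frac{d^j}{d\la^j}\cos\la t\Bigr)_{\la = \la_k}\Bigr)\, dt = 0$$
for all such $k,j$. After conjugation, this says exactly that the element $\begin{pmatrix}\overline{w_1-\tilde w_1}\\ \overline{w_2-\tilde w_2}\end{pmatrix}$ of $\mathcal H$ is orthogonal to every vector function of the system $S_{\mathcal I}$ in \eqref{SI}. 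By the assumed completeness of $S_{\mathcal I}$ in $\mathcal H$, this element vanishes, so $w_1=\tilde w_1$ and $w_2=\tilde w_2$ a.e.\ on $(0,b)$, and combined with the first step, on all of $(0,\pi)$.

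Finally I would close the argument on $(a,\pi)$ through Lemma~\ref{lem:uniq}. In the Volterra equation \eqref{Volt} the kernel $H(t,s)$ is built only from the $A_{nk},B_{nk}$, while the free term $\mu(t)$ is built from $w_1,w_2$ (whose relevant arguments $\pi-t$ lie in $(0,b)$, where we have just proved equality) together with $A_1,B_1$; hence $H=\tilde H$ and $\mu=\tilde\mu$ on $(a,\pi)$. The uniqueness assertion of Lemma~\ref{lem:uniq} then forces $y=\tilde y$, i.e.\ $p_2=\tilde p_2$ and $q_2=\tilde q_2$, which is precisely $p=\tilde p$ and $q=\tilde q$ a.e.\ on $(a,\pi)$.

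Most of this is a direct assembly of the reductions in Section~2, so I expect no deep obstacle; the two delicate points are both in the middle step. One is the multiplicity bookkeeping: one must make sure that the $m_k$ appearing in \eqref{defEk} are compatible for the two problems, i.e.\ that the common value $\la_k$ is a zero of both $\Delta$ and $\tilde\Delta$ of order at least $m_k$ (automatic once $m_k$ is read as the multiplicity of $\la_k$ inside $\{\la_k\}_{k\in\mathcal I}$, since $\la_j=\tilde\la_j$ on $\mathcal I$). The other is simply to carry the complex conjugation in the scalar product of $\mathcal H$ correctly, so that the derived integral identities are translated into a genuine orthogonality statement to which completeness of $S_{\mathcal I}$ can be applied.
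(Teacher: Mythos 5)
Your proposal is correct and follows essentially the same route as the paper's own proof: deduce $w_i=\tilde w_i$ on $(b,\pi)$ from \eqref{intK3}, transfer the subspectrum equality into $\frac{d^j}{d\la^j}E(\la_k)=\frac{d^j}{d\la^j}\tilde E(\la_k)$ via \eqref{defEk}, use completeness of $S_{\mathcal I}$ with \eqref{relEk} to get $w_i=\tilde w_i$ on $(0,b)$, and finish with Lemma~\ref{lem:uniq}. Your explicit handling of the complex conjugation in the orthogonality step and of the multiplicity bookkeeping is a careful elaboration of details the paper leaves implicit, not a different argument.
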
 

\begin{proof}
Since $p(x) = \tilde p(x)$ and $q(x) = \tilde q(x)$ a.e. on $(0, a)$, we have $A_1(t) = \tilde A_1(t)$,
$B_1(t) = \tilde B_1(t)$, $A_{nk}(t) = \tilde A_{nk}(t)$, $B_{nk}(t) = \tilde B_{nk}(t)$ for $n \ge 2$, $k = 1, 2$,
$t \in [0, \pi)$. The relations \eqref{intK3} yield
$w_1(x) = \tilde w_1(x)$, $w_2(x) = \tilde w_2(x)$ a.e. on $(b, \pi)$.
By virtue of \eqref{defEk} and the equality 
$\la_k = \tilde \la_k$, $k \in \mathcal I$, we have $\dfrac{d^j}{d \la^j} E(\la_k) = \dfrac{d^j}{d \la^j}\tilde E(\tilde \la_k)$
for $k \in \mathcal J$, $j = \overline{0, m_k-1}$.
Since the system $\mathcal S_{\mathcal I}$ is complete in $\mathcal H$, we obtain $w_1(t) = \tilde w_1(t)$ 
and $w_2(t) = \tilde w_2(t)$ a.e. on $(0, b)$. Applying Lemma~\ref{lem:uniq}, we arrive at the assertion of the theorem.
\end{proof}

If the system $\mathcal S_{\mathcal I}$ is a Riesz basis in $\mathcal H$, one can solve Inverse Problem~2 by the following algorithm.

\medskip

{\bf Algorithm 1.} Let the functions $p(x)$ and $q(x)$ for $x \in (0, a)$ and the eigenvalues $\{ \la_k \}_{k \in \mathcal I}$ be given.

\begin{enumerate}
\item Construct the functions $p_1$, $q_1$ by \eqref{defp1} and then $A_1$, $B_1$ by \eqref{defA1}, \eqref{defB1}.
\item Find $w_1(t)$ and $w_2(t)$ for $t \in (b, \pi)$ by formulas \eqref{intK3}.
\item Find $\dfrac{d^j}{d \la^j} E(\la_k)$, $k \in \mathcal J$, $j = \overline{0, m_k-1}$, using \eqref{defEk}.
\item Construct the system of vector functions $S_{\mathcal I}$, and determine the functions $w_1(t)$ and $w_2(t)$ from the coordinates $\dfrac{d^j}{d \la^j} E(\la_k)$ with respect to the Riesz basis
$S_{\mathcal I}$ (see \eqref{relEk}).
\item Construct the functions $A_{nk}$, $B_{nk}$, $n \ge 2$, $k = 1, 2$ by \eqref{defA2}, \eqref{defB2}.
\item Solving the system of Volterra integral equations \eqref{intK2}, find the functions $p_2(x) = p(x)$
and $q_2(x) = q(x)$ for $x \in (a, \pi)$.
\end{enumerate}

{\large \bf 4. Specific partial inverse problem}

\bigskip

The formulation of the results in the previous section contain abstract conditions of the completeness or the Riesz-basicity of the system $S_{\mathcal I}$.
In this section, we show that these conditions hold for some certain subsets $\mathcal I$ with the certain $a$.

Let $m \in \mathbb N$, $m \ge 2$. Put $a := \pi - \frac{\pi}{m}$, $\mathcal I := \{ s m \colon s \in \mathbb Z \}$.
Then the following result is valid.

\begin{lem} \label{lem:Riesz}
For an arbitrary sequence $\{ \la_k \}_{k \in \mathcal I} = \{ \la_{s m} \}_{s \in \mathbb Z}$ of not necessarily distinct complex numbers, satisfying
the relation
\begin{equation} \label{asymptlam}
    \la_{s m} = s m + \varkappa_s, \quad \{ \varkappa_s \} \in l_2,
\end{equation}
the system $S_{\mathcal I}$, defined by \eqref{SI}, is a Riesz basis in $\mathcal H$.
\end{lem}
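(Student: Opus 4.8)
The plan is to reduce the Riesz-basis property in $\mathcal H = L_2(0,b)\oplus L_2(0,b)$, where $b = \pi - a = \frac{\pi}{m}$, to a statement about a perturbed system of exponentials on a symmetric interval, and then to apply a stability theorem for Riesz bases. First I would introduce the unitary map $\Phi \colon \mathcal H \to L_2(-b, b)$ given by $(\Phi g)(t) = \frac{1}{\sqrt 2}\bigl(g_2(t) + i g_1(t)\bigr)$ for $t \in (0,b)$ and $(\Phi g)(t) = \frac{1}{\sqrt 2}\bigl(g_2(-t) - i g_1(-t)\bigr)$ for $t \in (-b, 0)$. A direct computation shows that $\Phi$ is unitary and that, since $\Phi$ does not depend on $\la$,
\[
\Phi \begin{pmatrix} \dfrac{d^j}{d\la^j}\sin\la t \\[6pt] \dfrac{d^j}{d\la^j}\cos\la t \end{pmatrix}_{\la = \la_k} = \frac{1}{\sqrt 2}\,(it)^j e^{i\la_k t}, \quad t \in (-b, b).
\]
Thus $S_{\mathcal I}$ is unitarily equivalent (up to the constant $\tfrac{1}{\sqrt 2}$) to the exponential system $\bigl\{ (it)^j e^{i \la_{sm} t} \bigr\}$, $s \in \mathbb Z$, $j = \overline{0, m_{k}-1}$, in $L_2(-b, b)$, and it suffices to prove that this system is a Riesz basis there.

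Next I would analyze the unperturbed system $\{ e^{i m s t} \}_{s \in \mathbb Z}$ obtained by setting $\varkappa_s \equiv 0$. The interval $(-\tfrac{\pi}{m}, \tfrac{\pi}{m})$ has length $\tfrac{2\pi}{m}$ and the frequencies $ms$ are spaced exactly by $m = \tfrac{2\pi}{\,2\pi/m\,}$, so a one-line computation yields $\int_{-\pi/m}^{\pi/m} e^{ims t}\,\overline{e^{ims' t}}\, dt = \tfrac{2\pi}{m}\,\delta_{ss'}$, i.e. $\{ e^{imst} \}_{s \in \mathbb Z}$ is an orthogonal basis of $L_2(-b,b)$; this is my reference Riesz basis. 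I would then record an observation that makes the multiplicities harmless: since $\{\varkappa_s\} \in l_2$ forces $\varkappa_s \to 0$, there is an index $S_0$ with $|\varkappa_s| < m/4$ for $|s| > S_0$, so the $\la_{sm}$ with $|s| > S_0$ are mutually separated by at least $m/2$ and cannot coincide with each other or with the bounded family of small-index values. Hence only finitely many $\la_{sm}$ are multiple, and the derivative terms $(it)^j e^{i\la_{sm}t}$ with $j \ge 1$ occur only finitely often. On the cofinite simple tail I would pair $e^{i\la_{sm}t}$ with $e^{imst}$ and use $|e^{i\theta}-1|\le|\theta|$ to get
\[
\| e^{i\la_{sm} t} - e^{imst} \|_{L_2(-b,b)}^2 = \int_{-\pi/m}^{\pi/m} |e^{i\varkappa_s t} - 1|^2\, dt \le |\varkappa_s|^2\!\int_{-\pi/m}^{\pi/m}\! t^2\, dt = \frac{2\pi^3}{3m^3}\,|\varkappa_s|^2,
\]
so $\sum_{s} \| e^{i\la_{sm}t} - e^{imst} \|^2 \le C \sum_s |\varkappa_s|^2 < \iy$.

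With quadratic closeness in hand I would invoke a Bari/Paley--Wiener stability argument. The operator carrying the orthonormal basis $\sqrt{m/2\pi}\,e^{imst}$ to the differences $\sqrt{m/2\pi}\bigl(e^{i\la_{sm}t} - e^{imst}\bigr)$ is Hilbert--Schmidt, hence compact, so the operator $I + R$ sending the reference basis onto the perturbed (simple-tail) system is Fredholm of index zero. Consequently the perturbed system is a Riesz basis as soon as it is complete: completeness makes the range of $I+R$ dense, the Fredholm property makes that range closed, so $I+R$ is onto, and index zero then forces it to be invertible. The finitely many multiple-eigenvalue terms are then incorporated by a finite-rank adjustment of the basis, which again reduces matters to completeness of the full system $\bigl\{ (it)^j e^{i\la_{sm}t} \bigr\}$ in $L_2(-b, b)$.

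The hard part will be this completeness at the critical (Nyquist) density $\tfrac{1}{m}$, where counting zeros does not by itself force completeness. The plan here is the generating-function argument. Suppose $F(\la) = \int_{-\pi/m}^{\pi/m} f(t)\, e^{i\la t}\, dt$, with $f \in L_2(-b,b)$, vanishes at every $\la_{sm}$ to order at least $m_k$; then $F$ is entire of exponential type $\tfrac{\pi}{m}$ and lies in $L_2(\mathbb R)$ on the real axis (Paley--Wiener). I would construct the canonical product $G(\la)$ whose zeros are exactly the $\la_{sm}$ with the correct multiplicities and show, using $\{\varkappa_s\}\in l_2$, that $G$ is a sine-type function with $|G(x+iy)| \asymp e^{(\pi/m)|y|}$ off the real axis and with $|G|$ bounded below along suitable horizontal lines. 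Since the zeros of $F$ cancel those of $G$, the quotient $F/G$ is entire of exponential type zero and bounded, hence constant by Phragm\'en--Lindel\"of, and the $L_2$-decay of $F$ along $\mathbb R$ forces the constant to be zero, i.e. $f = 0$. The coincidences among the $\la_{sm}$ are absorbed into the multiplicities of the zeros of $G$, so the same argument disposes of the multiple eigenvalues, and the $l_2$-condition is used once more to control the clustering of the finitely many collisions.
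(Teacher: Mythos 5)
Your reduction to exponentials on $(-b,b)$, the orthogonality of the unperturbed system, the quadratic-closeness estimate, and the Fredholm/Bari argument reducing everything to completeness all run parallel to the paper, which works directly in $\mathcal H$ and simply observes that $S_{\mathcal I}$ is $l_2$-close to the orthogonal basis $\{(\sin smt, \cos smt)^{T}\}_{s\in\mathbb Z}$ (a cosmetic difference; your unitary $\Phi$ is a correct bookkeeping device, though note that for complex $\varkappa_s$ the bound $|e^{i\theta}-1|\le|\theta|$ needs the harmless extra factor $e^{|\varkappa_s| b}$). The genuine divergence is in the completeness step, and it is also where your plan carries the real risk. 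You propose to build the canonical product $G$ over the zeros $sm+\varkappa_s$ by hand and prove sine-type bounds $|G(x+iy)|\asymp e^{b|y|}$ from $\{\varkappa_s\}\in l_2$; that is a substantial piece of classical entire-function analysis (comparable in weight to the whole lemma), it is complicated by the fact that multiple zeros destroy the separation required for sine-type functions in the strict sense, and you only assert it ("I would construct\dots and show\dots"). The paper sidesteps this entirely with one trick: by the existence half of Theorem~\ref{thm:BB17}, any sequence satisfying \eqref{asymptlam} \emph{is} the spectrum of some Dirac problem of the form \eqref{eqv}--\eqref{bc} on $(0,\pi/m)$, so a generating function with exactly these zeros and multiplicities comes for free in the form $\Delta(\lambda)=\sin\lambda b+\int_0^b(w_1\sin\lambda t+w_2\cos\lambda t)\,dt$, whose growth and lower bounds are standard; then $H/\Delta$ is bounded entire, Liouville gives $H=C\Delta$, and the decay of $H$ on the real axis forces $C=0$. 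Your route is more self-contained and does not lean on the inverse spectral theorem, but to be a complete proof it must actually supply the product representation and its lower bounds; as written, that is the one step you have named rather than proved, and I would either import it as a citable lemma on generating functions of $l_2$-perturbed arithmetic sequences or adopt the paper's device.
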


\begin{proof}
According to the asymptotic relation \eqref{asymptlam}, the system $\mathcal S_{\mathcal I}$ is $l_2$-close to the system
$$
  	\left\{ \begin{pmatrix} \sin s m t \\ \cos s m t \end{pmatrix} \right\}_{s \in \mathbb Z},
$$
which is an orthogonal basis in $\mathcal H$. Thus, it remains to prove, that $\mathcal S_{\mathcal I}$ is complete in $\mathcal H$.
Suppose $h_1, h_2 \in L_2(0, b)$ are such functions, that
$$
   \int_0^b \left( h_1(t) \bigg(\dfrac{d^j}{d \la^j} \sin \la t\bigg)_{\la = \la_k} + 
   h_2(t) \bigg(\dfrac{d^j}{d \la^j} \cos \la t \bigg)_{\la = \la_k} \right) \, dt = 0, \quad k \in \mathcal J, \: j = \overline{0, m_k-1}.
$$
This means that the entire function
$$
    H(\la) := \int_0^b \left( h_1(t) \sin \la t + h_2(t) \cos \la t \right)\, dt
$$
has zeros $\la_k$ with the multiplicities $m_k$ for $k \in \mathcal J$.

On the other hand, in view of \eqref{asymptlam} and Theorem~\ref{thm:BB17}, the sequence $\{ \la_{sm} \}_{s \in \mathbb Z}$
is the spectrum of some boundary value problem $D$ of the form \eqref{eqv}-\eqref{bc} on the interval $\left(0, \frac{\pi}{m} \right)$
instead of $(0, \pi)$. Its characteristic function admits the following representation (see \eqref{Delta}):
$$
   \Delta(\la) = \sin \la b + \int_0^b \left( w_1(t) \sin \la t + w_2(t) \cos \la t \right) \, dt, \quad w_1, w_2 \in L_2(0, b). 
$$
Clearly, the function $\dfrac{H(\la)}{\Delta(\la)}$ is entire and bounded in the whole $\la$-plane. By the Liouville's theorem, 
$H(\la) \equiv C \Delta(\la)$, where $C$ is a constant. However, $\lim\limits_{\la \to +\iy} H(\la) = 0$, while 
$\lim\limits_{\la \to +\iy} \Delta(\la)$ does not exist. Thus, $C = 0$ and $H(\la) \equiv 0$. Consequently, $h_1(t) = h_2(t) = 0$
a.e. on $(0, b)$, and the system $\mathcal S_{\mathcal I}$ is complete.
\end{proof}

The following theorem provides sufficient (and also necessary) conditions for the solvability of Inverse Problem~2 for $a = \pi - \frac{\pi}{m}$, 
$I = \{ s m \colon s \in \mathbb Z \}$.

\begin{thm}
Let $p$ and $q$ be arbitrary functions from $L_2(0, a)$, $a = \pi - \frac{\pi}{m}$, and  
let $\{ \la_k \}_{k \in \mathcal I} = \{ \la_{s m} \}_{s \in \mathbb Z}$ be arbitrary complex numbers, satisfying \eqref{asymptlam}.
Then the functions $p$ and $q$ can be uniquely continued to the interval $(a, \pi)$, so that $p, q \in L_{2, \pi}(0, \pi)$ and $\{ \la_k \}_{k \in \mathcal I}$
are eigenvalues of the boundary value problem $D = D(p, q)$ with the corresponding multiplicities. 
\end{thm}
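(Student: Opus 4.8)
The plan is to treat uniqueness and existence separately, the existence part being a rigorous realization of Algorithm~1. Uniqueness is immediate: by Lemma~\ref{lem:Riesz} the system $S_{\mathcal I}$ is a Riesz basis, hence complete in $\mathcal H$, so Theorem~\ref{thm:uniq} applies and any two continuations that agree on $(0,a)$ and share the subspectrum $\{\la_{sm}\}$ coincide a.e.\ on $(a,\pi)$.

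For existence I would follow the six steps of Algorithm~1. From the prescribed $p,q\in L_2(0,a)$ I first form $p_1,q_1$ by \eqref{defp1} and the auxiliary functions $A_1,B_1,A_{nk},B_{nk}$ by \eqref{defA1}--\eqref{defB2}; the estimate \eqref{estAB} guarantees these lie in $L_2$ and that the series converge. The relations \eqref{intK3} then define $w_1,w_2$ on $(b,\pi)$ as $L_2$ functions. Next, using \eqref{defEk} together with $\la_k=\la_{sm}$, I compute the numbers $c_{kj}:=\frac{d^j}{d\la^j}E(\la_k)$ for $k\in\mathcal J$, $j=\overline{0,m_k-1}$; this formula is usable even though $\Delta$ is not yet known, precisely because it encodes the requirement $\frac{d^j}{d\la^j}\Delta(\la_k)=0$. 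Since $S_{\mathcal I}$ is a Riesz basis, there is a unique $(w_1,w_2)\in\mathcal H$ whose pairings \eqref{relEk} against $S_{\mathcal I}$ equal the $c_{kj}$, which extends $w_1,w_2$ to $(0,b)$ and hence to all of $(0,\pi)$. Finally, Lemma~\ref{lem:uniq} solves the Volterra system \eqref{intK2} for $t\in(a,\pi)$, yielding $p_2,q_2\in L_{2,\pi}(a,\pi)$; I set $p=p_1+p_2$ and $q=q_1+q_2\in L_{2,\pi}(0,\pi)$.

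The decisive verification is that this $(p,q)$ really has $\{\la_{sm}\}$ in its spectrum with the prescribed multiplicities. By construction $(p,q)$ and $(w_1,w_2)$ satisfy \eqref{intK2}, equivalently \eqref{intK}, for all $t\in(0,\pi)$: the part $t\in(0,a)$ is \eqref{intK3} and the part $t\in(a,\pi)$ is the Volterra system just solved. By the reduction of \cite{BB17}, for any $(p,q)$ the right-hand sides of \eqref{intK} coincide with the coefficients $w_1,w_2$ in the representation \eqref{Delta} of the characteristic function of $D(p,q)$; since our $(p,q)$ and $(w_1,w_2)$ satisfy \eqref{intK}, these constructed $w_1,w_2$ are precisely those coefficients, so $\Delta(\la)=\sin\la\pi+\int_0^\pi(w_1\sin\la t+w_2\cos\la t)\,dt$. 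Combining this with the definition \eqref{defE} of $E$ and with the choice of the $c_{kj}$ through \eqref{defEk}, I obtain $\frac{d^j}{d\la^j}\Delta(\la_k)=c_{kj}+\frac{d^j}{d\la^j}\sin\la\pi|_{\la_k}+\frac{d^j}{d\la^j}\int_b^\pi(\cdots)|_{\la_k}=0$ for $k\in\mathcal J$, $j=\overline{0,m_k-1}$, so each $\la_{sm}$ is an eigenvalue of multiplicity at least $m_k$, as required.

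The step I expect to be the main obstacle is the admissibility of the coordinate data, that is, showing $\{c_{kj}\}\in\ell_2$ so that the Riesz-basis reconstruction indeed produces honest functions $w_1,w_2\in L_2(0,b)$ rather than a merely formal series. I would control this directly from \eqref{defEk}: the term $\frac{d^j}{d\la^j}\sin\la\pi|_{\la_{sm}}$ is $\ell_2$ because $\sin\la_{sm}\pi=(-1)^{sm}\sin(\varkappa_s\pi)$ with $\{\varkappa_s\}\in\ell_2$ by \eqref{asymptlam}, while the terms $\frac{d^j}{d\la^j}\int_b^\pi(w_1\sin\la t+w_2\cos\la t)\,dt|_{\la_{sm}}$ are $\ell_2$ because the frequencies $\{\la_{sm}\}$ are uniformly separated for large $|s|$ (with only finitely many exceptions, coming from the finitely many multiplicities), so the associated exponential system is a Bessel sequence on the bounded interval $(b,\pi)$; the finitely many derivative and multiplicity corrections do not affect membership in $\ell_2$. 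This admissibility, together with the Riesz-basis property of Lemma~\ref{lem:Riesz}, is what makes the reconstruction and the Volterra step of Algorithm~1 rigorous and closes the existence proof.
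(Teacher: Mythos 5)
Your proof follows essentially the same route as the paper's: the published proof is a three-sentence appeal to Lemma~\ref{lem:Riesz} (Riesz-basicity of $S_{\mathcal I}$), Algorithm~1 for the construction, and Theorem~\ref{thm:uniq} for uniqueness, which is exactly your skeleton. The extra details you supply --- the verification via \eqref{intK} and \eqref{defEk} that the constructed pair actually realizes the prescribed subspectrum, and the $\ell_2$-admissibility of the coordinates $\frac{d^j}{d\la^j}E(\la_k)$ needed for the Riesz-basis reconstruction --- are correct and are left implicit in the paper.
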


\begin{proof}
Since the system $\mathcal S_{\mathcal I}$, constructed from the given $\{ \la_k \}_{k \in \mathcal I}$, is a Riesz basis by Lemma~\ref{lem:Riesz},
one can construct the functions $p, q \in L_{2, \pi}(a, \pi)$, using Algorithm~1. By construction, $\{ \la_k \}_{k \in \mathcal I}$ are eigenvalues of the problem $D = D(p, q)$.
The uniqueness of $D$ follows from Theorem~\ref{thm:uniq}.
\end{proof}

For $m = 2$, we get a half inverse problem, when the functions $p$ and $q$ are known a priori on the interval $\left( 0, \frac{\pi}{2} \right)$
and can be recovered on $\left( \frac{\pi}{2}, \pi \right)$ by the half of the spectrum $\{ \la_{2k} \}_{k \in \mathbb Z}$.

\medskip
 
{\bf Acknowledgment.} This work was supported by Grant 17-11-01193 of the Russian Science Foundation.

\medskip

\medskip

\noindent Natalia Pavlovna Bondarenko \\
1. Department of Applied Mathematics, Samara National Research University, \\
Moskovskoye Shosse 34, Samara 443086, Russia, \\
2. Department of Mechanics and Mathematics, Saratov State University, \\
Astrakhanskaya 83, Saratov 410012, Russia, \\
e-mail: {\it BondarenkoNP@info.sgu.ru}

\end{document}